\newtheorem{thm}{Theorem}[section]
\newtheorem{lem}[thm]{Lemma}
\theoremstyle{definition}
\newtheorem{defn}[thm]{Definition}
\newtheorem{ques}[thm]{Question}
\theoremstyle{remark}
\newtheorem{rem}[thm]{Remark}
\numberwithin{equation}{section}
\newcommand{\Z}{\mathbb Z}
\newcommand{\fix}{\mathrm{Fix}}
\newcommand{\ind}{\mathrm{ind}}
\newcommand{\F}{\mathbf{F}}      %fixed point class F
\newcommand{\B}{\mathcal{B}}
\begin{document}
%\begin{CJK}{GBK}{song}

\title{Aspherical manifolds which do not have Bounded Index Property}
\author{Qiang Zhang, Xuezhi Zhao}

\address{School of Mathematics and Statistics, Xi'an Jiaotong University, Xi'an 710049, China}
\email{zhangq.math@mail.xjtu.edu.cn}

\address{School of Mathematical Sciences, Capital Normal University, Beijing 100048, China}
\email{zhaoxve@mail.cnu.edu.cn}

\thanks{The authors are partially supported by NSFC (Nos. 11961131004, 11971389, 12071309 and 12271385).}

\subjclass[2010]{55M20, 55N10, 32Q45}

\keywords{Nielsen numbers, Lefschetz numbers, indices, bounds, hyperbolic surfaces, products}

\date{\today}
%\commby{}%
% ----------------------------------------------------------------
\begin{abstract}
We prove that some aspherical manifolds do not have BIP, negating a question of Jiang.
\end{abstract}
\maketitle

% ----------------------------------------------------------------

\section{Introduction}

For a selfmap $f$ of a space $X$, Nielsen fixed point theory is concerned with
the properties of the fixed point set
$$\fix f:=\{x\in X\mid f(x)=x\}$$
that splits into a disjoint union of \emph{fixed point classes}. For each fixed point class $\F$ of $f$, a homotopy invariant \emph{index} $\ind(f,\F)\in \Z$ is well-defined (see Section \ref{sec. fpc} or \cite{J83} for definitions).

In 1998, Jiang \cite{J98} introduced the following definitions of BIP and BIPH, while Ye and Zhang \cite{ZY20} extended the definition of BIPHE.

\begin{defn}
A compact polyhedron $X$ is said to have the \emph{Bounded Index Property (BIP)}(resp. \emph{Bounded Index Property for Homeomorphisms (BIPH)}, \emph{Bounded Index Property for Homotopy Equivalences (BIPHE)}) if there is an integer $\B>0$ such that for any map (resp. homeomorphism, homotopy equivalence)$f: X\rightarrow X$ and any fixed point class $\F$ of $f$, the index $|\ind(f,\F)|\leq \B$.
\end{defn}

Moreover, Jiang asked the following question:

\begin{ques}(\cite[Qusetion 3]{J98})\label{Jiang's question}
Does every compact aspherical polyhedron $X$ (i.e. $\pi_i(X)=0$ for all $i>1$) have BIP or BIPH?
\end{ques}

\begin{rem}
Clearly, if $X$ has BIP, then it has BIPHE and hence has BIPH. For a closed aspherical manifold $M$, if the well-known Borel's conjecture (any homotopy equivalence $f:M\rightarrow M$ is homotopic to a homeomorphism $g:M\rightarrow M$) is true, then $M$ has BIPHE if and only if it has BIPH.
\end{rem}

Many types of aspherical polyhedra had been showed to support Question \ref{Jiang's question} with a positive answer:
infra-solvmanifolds have BIP \cite{Mc92}; graphs and hyperbolic surfaces have BIP \cite{J98, K97, K00, JWZ11};  geometric 3-manifolds have BIPH \cite{JW}; orientable Seifert 3-manifolds with hyperbolic orbifolds have BIPH \cite{Z12}; products of hyperbolic surfaces have BIPH \cite{ZZ19};  and products of negatively curved Riemannian manifolds have BIPHE \cite{Z14, Z15, ZY20}.

In this paper, we prove some aspherical manifolds that do not have BIP/BIPH. Let $S^1$ denote the circle, and let $\Sigma_2=T^2\sharp T^2$ denote the connected sum of two copies of the torus $T^2$, i.e., the connected orientable closed surface of genus 2. Our main results are the following.

\begin{thm}\label{main thm1}
$\Sigma_2\times S^1$ has BIPH, but does not have BIP.
\end{thm}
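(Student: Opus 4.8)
The plan splits cleanly into the two halves of the statement.

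\smallskip
\noindent\emph{BIPH.} This half I would simply quote from the literature: $\Sigma_2\times S^1$ is a closed orientable Seifert fibred $3$-manifold over the hyperbolic base orbifold $\Sigma_2$ (equivalently, it carries the $\mathbb{H}^2\times\mathbb{R}$ geometry), so it has BIPH by the cited theorem for geometric $3$-manifolds \cite{JW} (or for Seifert manifolds with hyperbolic orbifold \cite{Z12}). One also sees it directly: an automorphism of $\pi_1(\Sigma_2\times S^1)=\pi_1(\Sigma_2)\times\Z$ fixes the centre $1\times\Z$ setwise (as $\pi_1(\Sigma_2)$ is centreless), so any homeomorphism is homotopic to one preserving the $S^1$-fibration $\Sigma_2\times S^1\to\Sigma_2$ and covering a homeomorphism $\bar f$ of $\Sigma_2$ with fibre-degree $\pm1$; since hyperbolic surfaces have BIP and a degree-$\pm1$ self-map of $S^1$ contributes only a bounded factor to the index (its fixed point classes have index $0$ or $+1$, at most two of them), the fibre index formula bounds $|\ind(f,\F)|$ uniformly. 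The real content is the failure of BIP, which I plan to establish by exhibiting self-maps having a single fixed point class of arbitrarily large index.

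\smallskip
\noindent\emph{Not BIP.} Fix a map $\theta\colon\Sigma_2\to S^1$ whose class in $H^1(\Sigma_2;\Z)$ is primitive, equivalently with $\theta_\#\colon\pi_1(\Sigma_2)\to\Z$ surjective; such $\theta$ exists since $H_1(\Sigma_2)\cong\Z^4$ and $\Sigma_2,S^1$ are aspherical. For each integer $d\ge2$, writing $S^1=\RR/\Z$, define $f_d\colon\Sigma_2\times S^1\to\Sigma_2\times S^1$ by $f_d(x,t)=(x,\,dt+\theta(x))$. Then $\fix f_d=\{(x,t):(d-1)t=-\theta(x)\text{ in }S^1\}$ is the preimage of the graph of $-\theta$ under the fibrewise $(d-1)$-fold covering $(x,t)\mapsto(x,(d-1)t)$, so it is a $(d-1)$-sheeted covering space of $\Sigma_2$. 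The decisive step is to identify this cover: it is classified by $\ker\bigl(\pi_1(\Sigma_2)\xrightarrow{\theta_\#}\Z\to\Z/(d-1)\bigr)$, a subgroup of index exactly $d-1$ because $\theta_\#$ is onto; hence $\fix f_d$ is a \emph{connected} closed orientable surface $\widetilde\Sigma_d$ with $\chi(\widetilde\Sigma_d)=(d-1)\chi(\Sigma_2)=-2(d-1)$. Being connected and being all of $\fix f_d$, $\widetilde\Sigma_d$ is a single fixed point class of $f_d$, so $\ind(f_d,\widetilde\Sigma_d)=L(f_d)$; and a short Künneth computation — on $H_i(\Sigma_2)\otimes H_j(S^1)$ the diagonal block of $(f_d)_*$ is multiplication by $d^j$, the twist $\theta$ entering only off the diagonal — gives $L(f_d)=\chi(\Sigma_2)(1-d)=2(d-1)$. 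Thus $f_d$ has a fixed point class of index $2(d-1)$, and letting $d\to\infty$ shows $\Sigma_2\times S^1$ does not have BIP.

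\smallskip
\noindent\emph{Where the difficulty is.} The main obstacle is the connectedness of $\fix f_d$ — i.e.\ the use of primitivity of $[\theta]$ — which is exactly what fuses the $d-1$ fibrewise fixed points into one class: the monodromy of the covering $\fix f_d\to\Sigma_2$ along a loop $\alpha$ is the shift of $\Z/(d-1)$ by $\theta_\#(\alpha)$, so transitivity is precisely surjectivity of $\theta_\#\bmod(d-1)$. Without the twist the effect vanishes: for $\theta\equiv0$ one gets $\fix f_d=\Sigma_2\times\{t:(d-1)t=0\}$, a disjoint union of $d-1$ copies of $\Sigma_2$, each a fixed point class of index $2$ — bounded — which mirrors the fact that any genuine product $g\times h$ satisfies $|\ind(g\times h,\F\times\F')|=|\ind(g,\F)|\,|\ind(h,\F')|$, bounded by the BIP bound of $\Sigma_2$ times $1$; so a nontrivial twist is essential, and verifying via the covering-space/monodromy picture that $f_d$ really merges all the fibre fixed points is the heart of the argument. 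A minor point to record is that $\widetilde\Sigma_d=\fix f_d$ is compact and is the whole fixed point set, so $\ind(f_d,\widetilde\Sigma_d)=L(f_d)$ with no extra work; alternatively one can bypass the Lefschetz number and quote the fixed-submanifold index formula, $f_d$ being the identity on $\widetilde\Sigma_d$ and multiplying the normal $S^1$-direction by $d$, which gives $\ind(f_d,\widetilde\Sigma_d)=\chi(\widetilde\Sigma_d)\,\mathrm{sign}(1-d)=2(d-1)$.
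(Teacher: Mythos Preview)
Your proof is correct and takes essentially the same approach as the paper: the same citation for BIPH, and the same family of twisted maps $f_d(x,t)=(x,\,dt+\theta(x))$ (the paper writes $d=m+1$) with primitive $\theta_\#$, yielding a single fixed point class of index $L(f_d)=2(d-1)=2m$. The one presentational difference is how the single-class property is established: you argue geometrically that $\fix f_d\to\Sigma_2$ is a \emph{connected} $(d-1)$-sheeted cover because the monodromy (shift by $\theta_\#(\alpha)$ on $\Z/(d-1)$) is transitive, whereas the paper works algebraically with lifting classes, showing directly that all liftings $(1,c^s)\tilde f$ are $\tilde f_\pi$-conjugate via conjugation by powers of $(a_1,1)$; the paper's own remark after the proof then recovers exactly your geometric picture, identifying $\fix f$ as the connected closed orientable surface of genus $m+1$.
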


\begin{thm}\label{main thm2}
$\Sigma_2\times T^2$ does not have BIPH, and hence does not have BIP.
\end{thm}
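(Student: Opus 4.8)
The plan is to refute BIPH directly, by constructing for every integer $n\ge 3$ a self-homeomorphism $F_n$ of $\Sigma_2\times T^2$ that has a single fixed point class $\F_n$ with $|\ind(F_n,\F_n)|\to\infty$; since BIP implies BIPH (as noted in the Remark above), this also gives the ``hence'' clause. Write $T^2=\RR^2/\Z^2$ with its abelian group structure, fix a degree-one (pinch) map $c\colon\Sigma_2\to T^2$ — one exists because $\Sigma_2=T^2\sharp T^2$, and being of degree one it induces an epimorphism $c_*\colon\pi_1(\Sigma_2)\twoheadrightarrow\Z^2$ — and for each $n$ let $A_n=\bigl(\begin{smallmatrix}n&1\\-1&0\end{smallmatrix}\bigr)\in\mathrm{SL}_2(\Z)$, regarded as a linear automorphism of $T^2$. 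I would then put
\[
F_n(x,y)=\bigl(x,\ c(x)+A_ny\bigr),\qquad(x,y)\in\Sigma_2\times T^2,
\]
which is a homeomorphism with inverse $(x,y')\mapsto\bigl(x,\,A_n^{-1}(y'-c(x))\bigr)$. (It is precisely the abundance of such twists and of matrices $A_n$ with $\det(I-A_n)\to\infty$ inside $\mathrm{GL}_2(\Z)$ — as opposed to $\mathrm{GL}_1(\Z)=\{\pm1\}$ — that makes $T^2$, and not $S^1$, the relevant second factor.)

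First I would analyse $\fix F_n=\{(x,y):(I-A_n)y=c(x)\ \text{in}\ T^2\}$. Since $\det(I-A_n)=2-n\ne0$, the matrix $I-A_n$ is invertible over $\RR$, so $\fix F_n$ is the transverse zero set of the submersion $(x,y)\mapsto(I-A_n)y-c(x)$, hence a closed surface, and projection to the first factor presents it as a covering of $\Sigma_2$ of degree $|\det(I-A_n)|=n-2$. The key point is then a monodromy computation showing this covering is classified by the composite $\pi_1(\Sigma_2)\xrightarrow{c_*}\Z^2\to\Z^2/(I-A_n)\Z^2$; as $c_*$ is onto so is this composite, whence $\fix F_n$ is \emph{connected}. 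A connected $(n-2)$-fold cover of $\Sigma_2$ has Euler characteristic $-2(n-2)$, and a connected subset of $\fix F_n$ lies in a single fixed point class; since $\fix F_n$ exhausts the fixed point set, it constitutes the unique fixed point class $\F_n$ of $F_n$.

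Next I would compute $\ind(F_n,\F_n)$ using the standard index formula for nondegenerate fixed point manifolds. A direct computation shows that, in the trivialisation of the normal bundle of $\fix F_n$ coming from the $T^2$-factor, the normal derivative of $F_n$ along $\fix F_n$ is the constant matrix $A_n$, and $I-A_n$ is invertible; so the normal $1$-jet of $F_n$ along $\fix F_n$ agrees with that of the product $\mathrm{id}_{\fix F_n}\times A_n$ on a tubular neighbourhood $\fix F_n\times\RR^2$. Applying the product formula together with the classical Lefschetz formula then gives
\[
\ind(F_n,\F_n)=\chi(\fix F_n)\cdot\operatorname{sign}\det(I-A_n)=\bigl(-2(n-2)\bigr)\cdot(-1)=2(n-2),
\]
which is unbounded as $n\to\infty$. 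Hence no bound $\B$ exists, and $\Sigma_2\times T^2$ has neither BIPH nor BIP.

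The step I expect to be the main obstacle is the connectivity assertion, i.e.\ that the $n-2$ ``sheets'' of fixed points genuinely \emph{fuse} into one Nielsen class. For the untwisted product $\mathrm{id}_{\Sigma_2}\times A_n$ (the case $c\equiv0$) the fixed point set is $\Sigma_2$ times an $(n-2)$-element subgroup, splitting into $n-2$ classes of index $2$ each — perfectly compatible with BIPH — so the whole phenomenon is produced by the twisting term $c(x)$, and the function of degree-one-ness (equivalently $\pi_1$-surjectivity) of $c$ is exactly to make the covering monodromy surject onto the deck group $\Z^2/(I-A_n)\Z^2$ for all $n$ at once. Conceptually, each $F_n$ is a homeomorphism with Nielsen number $N(F_n)=1$ yet $|L(F_n)|=2(n-2)$, which is impossible on a space with BIP(H), since there $|L|\le\B\cdot N$; constructing such $F_n$ is the entire content of the theorem. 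The one remaining technical point, routine for nondegenerate fixed point submanifolds, is that the higher-order normal terms of $F_n$ do not affect $\ind(F_n,\F_n)$ because the linear normal part $I-A_n$ is already invertible.
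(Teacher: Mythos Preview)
Your construction is essentially the paper's: a fibre-preserving homeomorphism $(x,y)\mapsto(x,\,c(x)+Ay)$ with $c\colon\Sigma_2\to T^2$ a degree-one pinch and $A\in\mathrm{SL}_2(\Z)$ chosen so that $|\det(I-A)|\to\infty$ (the paper takes $A=\bigl(\begin{smallmatrix}m+1&m\\1&1\end{smallmatrix}\bigr)$, you take $\bigl(\begin{smallmatrix}n&1\\-1&0\end{smallmatrix}\bigr)$). The one substantive difference is how the uniqueness of the fixed point class is established. The paper argues algebraically: it shows by an explicit conjugation in $\pi_1(\Sigma_2)\times\Z^2$ that any two liftings $(1,v')\tilde f$ and $(1,v'')\tilde f$ label the same class, and then reads off the index as $L(f)=L(\bar f)\,L(f_x)=(-2)(-m)=2m$. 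You argue geometrically: you recognise $\fix F_n$ as the pullback of the $(n-2)$-fold cover $(I-A_n)\colon T^2\to T^2$ along $c$, note that its monodromy is the surjection $\pi_1(\Sigma_2)\xrightarrow{c_*}\Z^2\twoheadrightarrow\Z^2/(I-A_n)\Z^2$, conclude connectedness (hence a single class), and then extract the index from the nondegenerate normal data. Both routes are correct and are really two faces of the same computation---your monodromy surjection is precisely what drives the paper's explicit conjugation---but your version makes the role of $\deg c=1$ more transparent and explains \emph{why} the twist fuses the $n-2$ fibrewise classes into one, while the paper's version avoids any appeal to the index formula for fixed submanifolds by reducing everything to a Lefschetz number.
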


In \cite{ZY20}, Ye and Zhang provided a sufficient condition for products to preserve BIP. Now, our results above indicate that BIP is not preserved under products in general.
To prove them, we construct a family of maps whose fixed point classes have indices going to infinity, see Section \ref{Proof of Theorem main thm1} and Section \ref{Proof of Theorem main thm2}.
The idea of our construction is partly inspired by Gogolev and Lafont \cite{GL16} and Neofytidis \cite{N20}.

\section{Facts on fixed point classes}\label{sec. fpc}

To prove the main theorems, we first introduce some facts on fixed point classes in this section, see \cite{J83} for more details.

\subsection{Fixed point class}

Let $f:X\rightarrow X$ be a selfmap of a compact polyhedron $X$, and let $q: \widetilde{X}\to X$ be a universal covering of $X$, with group $\pi$ of covering transformations which is identified with the fundamental group $\pi_1(X)$.
For any lifting $\tilde{f}:\widetilde{X}\to \widetilde{X}$ of $f$, the projection of its fixed point set is called a \emph{fixed
point class} of $f$, written $\F=q(\fix\tilde{f})$. Strictly speaking, we say two liftings $\tilde{f}$ and $\tilde{f'}$ of $f$ are \emph{conjugate} if there exists $\gamma\in \pi$ such that
$$\tilde{f'}=\gamma\comp \tilde{f}\comp \gamma^{-1}.$$
Then $\F=q(\fix\tilde{f})=q(\fix\tilde{f'})$ is said to be the fixed point class of $f$ \emph{labeled} by the conjugacy class $[\tilde f]$ of $\tilde{f}$. Thus, a fixed point class always carries a label which is a conjugacy class of liftings. The fixed point set $\fix f$ decomposes into a disjoint union of fixed point classes. When $\fix\tilde{f}= \emptyset$, $\F=q(\fix\tilde{f})$ is called an \emph{empty} fixed point class.

Moreover, each lifting $\tilde{f}$ induces an endomorphism $\tilde{f}_{\pi}:\pi\to\pi$ defined by
$$\tilde{f}\comp \gamma=\tilde{f}_{\pi}(\gamma)\comp \tilde{f},\quad \gamma\in \pi.$$
From now on, let a reference lifting $\tilde f$ of $f$ be chosen. Then every lifting of $f$ can be uniquely written as $\beta\comp \tilde f$ for some $\beta\in \pi$. Two liftings $\beta\comp \tilde f$ and $\beta'\comp \tilde f$ are conjugate, i.e., label the same fixed point class of $f$, if and only if $\beta, \beta'\in \pi$ are $\tilde f_\pi$-\emph{conjugate}, i.e., there exists $\gamma\in \pi$ such that
$$\beta'=\gamma^{-1}\beta\tilde f_\pi(\gamma).$$

A fixed point class is \emph{essential} if its index is non-zero. The number of essential fixed point classes of $f$ is called the $Nielsen$ $number$ of $f$, denoted by $N(f)$. The famous Lefschetz-Hopf theorem says that the sum of the indices of all the fixed points of $f$ is equal to the $Lefschetz$ $number$ $L(f)$.

\subsection{Product formula for the index of a fixed point class }
%First, we have the following product formula for the index of a fixed point class.

\begin{lem}\label{lem}
Let $p: E=\Sigma_2\times T^k\to \Sigma_2$ be the projection to the first factor, where $T^k$ ($k\geq 1$) is a $k$-torus. Let $f:E\to E$ be a fiber-preserving map with induced self-map $\bar f$ on the base space.
$$
\xymatrix{
E \ar[r]^f\ar[d]_p & E \ar[d]^p\\
\Sigma_2 \ar[r]^{\bar f} & \Sigma_2
}
$$
Then, for any fixed point class $\F$ of $f$, the projection $p(\F)$ is a fixed point class of $\bar f$, and
$$|\ind(f,\F)|=[\fix\bar f_{\pi} : p_{\pi}(\fix f_{\pi})]\cdot|\ind(\bar f, p(\F)|,$$
where $f_{\pi}: \pi_1(E, e)\to\pi_1(E,e)$ and $\bar f_{\pi}: \pi_1(\Sigma_2, x)\to\pi_1(\Sigma_2,x)$ for $e=(x,y)\in \F$, are the natural homomorphisms induced by $f$ and $\bar f$ respectively.
\end{lem}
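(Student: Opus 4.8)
\medskip\noindent\emph{Plan of proof.}
The plan is to pass to universal coverings and use the rigidity of the torus fibre. Write $\Gamma=\pi_1(\Sigma_2,x)$, so $\pi:=\pi_1(E,e)=\Gamma\times\Z^k$, and take the universal covering $\widetilde E=\widetilde{\Sigma_2}\times\RR^k\to E$ compatible with $q\colon\widetilde{\Sigma_2}\to\Sigma_2$, so that $p$ is covered by the projection $\widetilde E\to\widetilde{\Sigma_2}$. I would choose a lifting $\tilde f$ of $f$ realizing $\F$, i.e.\ with $\F=q(\fix\tilde f)$, fixing (when $\F\neq\emptyset$) a chosen lift of $e$, so that $f_\pi$ in the statement is $\tilde f_\pi$. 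Since $f$ is fibre-preserving, $\tilde f(\tilde x,v)=(\tilde{\bar f}(\tilde x),\phi(\tilde x,v))$ for a lifting $\tilde{\bar f}$ of $\bar f$ and a continuous $\phi$, and substituting this into $\tilde f\circ\gamma=f_\pi(\gamma)\circ\tilde f$ shows $f_\pi$ is ``block lower triangular'': $f_\pi(\gamma,n)=(\bar f_\pi(\gamma),\,\delta(\gamma)+An)$, where $A\colon\Z^k\to\Z^k$ is induced by $f$ on $\pi_1$ of a fibre and $\delta\colon\Gamma\to\Z^k$ is a homomorphism. In particular $\bar{\F}:=q(\fix\tilde{\bar f})$ is a fixed point class of $\bar f$ with $p(\F)\subseteq\bar{\F}$, and one reads off
\[
\fix f_\pi=\{(\gamma,n)\mid\gamma\in\fix\bar f_\pi,\ (I-A)n=\delta(\gamma)\},\qquad p_\pi(\fix f_\pi)=\{\gamma\in\fix\bar f_\pi\mid\delta(\gamma)\in(I-A)\Z^k\}.
\]

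Next I would reduce the index to a sum of local indices. By the homotopy lifting property of $p$, every homotopy of $\bar f$ lifts to a fibre-preserving homotopy of $f$, so after such a homotopy I may assume $\bar f$ and $f$ are smooth with only finitely many, nondegenerate, fixed points. For an isolated fixed point $(b,y)$ of $f$ (so $b\in\fix\bar f$), the derivative of $f$ there is block lower triangular, because $f$ is fibre-preserving, so $\ind(f,(b,y))=\ind(\bar f,b)\cdot\ind(f_b,y)$, where $f_b\colon T^k\to T^k$ is the fibre map over $b$. As $T^k$ is a torus, $f_b$ is homotopic to the linear endomorphism $A$; hence $L(f_b)=\det(I-A)$, $N(f_b)=|\det(I-A)|$, and every essential fixed point class of $f_b$ has the same index $\epsilon:=\mathrm{sign}\,\det(I-A)\in\{-1,0,1\}$. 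If $\det(I-A)=0$ every fibre index vanishes, so $\ind(f,\F)=0$ and the identity holds trivially; so assume $\det(I-A)\neq0$. Then a degree argument, using that each fibre map has nonzero Lefschetz number, shows every $b\in\bar{\F}$ carries a fixed point of $f$ lying in $\F$, whence $p(\F)=\bar{\F}$.

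It then remains to prove $\ind(f,\F)=\epsilon\,m\,\ind(\bar f,\bar{\F})$ with $m=[\fix\bar f_\pi:p_\pi(\fix f_\pi)]$, which gives the lemma on taking absolute values. Each intersection $\F\cap(\{b\}\times T^k)$ is a union of fixed point classes of $f_b$, each essential one of index $\epsilon$, so grouping the fixed points of $f$ in $\F$ by their image in $\bar{\F}$,
\[
\ind(f,\F)=\sum_{b\in\bar{\F}}\ind(\bar f,b)\sum_{\substack{y\in\fix f_b\\(b,y)\in\F}}\ind(f_b,y)=\epsilon\sum_{b\in\bar{\F}}m_b\,\ind(\bar f,b),
\]
where $m_b$ is the number of fixed point classes of $f_b$ merging into $\F$. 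Fixing a lift $\tilde x\in\fix\tilde{\bar f}$ over $b$ and labelling fixed point classes by $f_\pi$-conjugacy, the classes of $f_b$ get labelled by $\Z^k/(I-A)\Z^k$, and the computation of $f_\pi$-conjugacy above shows two of them merge into the same fixed point class of $f$ exactly when their labels differ by $[\delta(\gamma)]$ for some $\gamma\in\fix\bar f_\pi$. Since $\gamma\mapsto[\delta(\gamma)]$ defines a translation action of $\fix\bar f_\pi$ on $\Z^k/(I-A)\Z^k$, all of its orbits have the same size, equal to the order of its image, which by the first paragraph is $[\fix\bar f_\pi:p_\pi(\fix f_\pi)]$. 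Hence $m_b=m$ for every $b$, and $\ind(f,\F)=\epsilon\,m\,\ind(\bar f,\bar{\F})$.

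The hard part will be this last step: keeping track of which fixed point classes of the fibre maps collapse together inside $\F$, the fact that their number is the same over every point of $\bar{\F}$, and its identification with $[\fix\bar f_\pi:p_\pi(\fix f_\pi)]$. The reductions in the second paragraph --- the fibre-preserving homotopy to a finite nondegenerate fixed point set, and the block-triangular local index formula --- together with the torus input (essentially that $T^k$ is a Jiang space) should be routine. Alternatively, the whole lemma should follow from the general product formula for fixed point indices on fibrations in \cite{J83}, with the real content then being the explicit evaluation of the fibre contribution in terms of $\fix\bar f_\pi$ and $p_\pi(\fix f_\pi)$.
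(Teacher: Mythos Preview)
Your argument is correct and essentially self-contained, but the paper takes a much shorter route: it simply observes that every essential fixed point class of a self-map of $T^k$ has index $\pm 1$ (citing Brooks--Brown--Pak--Taylor and Halpern) and then invokes the general fibration machinery of \cite[IV~1.6 and IV~3.3]{J83} directly. In other words, the ``hard part'' you isolate---counting how many fibre classes merge into $\F$ and identifying this with $[\fix\bar f_\pi:p_\pi(\fix f_\pi)]$---is precisely what Jiang's product formula for fibre maps already packages, and the torus input you use (Jiang-space behaviour, all essential classes of index $\pm 1$) is exactly the hypothesis that makes those theorems apply cleanly. Your final sentence in fact anticipates this. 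What your approach buys is transparency: you make the block-triangular structure of $f_\pi$ and the orbit-counting argument explicit, so the reader sees why the index $[\fix\bar f_\pi:p_\pi(\fix f_\pi)]$ appears, rather than having it emerge from a black-box citation. What the paper's approach buys is brevity and generality---nothing about $\Sigma_2$ or triviality of the bundle is really used beyond what Jiang's fibration theorems need. One small point to tidy: in the degenerate case $\det(I-A)=0$ you correctly note $\ind(f,\F)=0$, but you should also remark that the stated identity then holds regardless of whether the group index on the right is finite (and that $p(\F)$ is still, by convention, the fixed point class labelled by $\tilde{\bar f}$ even if the inclusion $p(\F)\subseteq\bar\F$ is strict as sets).
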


\begin{proof}
Note that for any map of $T^k$, its essential fixed point classes have the same index $\pm 1$ \cite{B75, H79}. Then the conclusion follows from \cite[IV 1.6 and IV 3.3 Theorems]{J83}.
\end{proof}

\section{Proof of Theorem \ref{main thm1}}\label{Proof of Theorem main thm1}

\begin{proof}[\textbf{Proof of Theorem \ref{main thm1}}]
Let $E=\Sigma_2\times S^1$. Note that $E$ can be regarded as a Seifert 3-manifold with base space $\Sigma_2$, then it has BIPH following from \cite{Z12}.

Now we prove that it does not have BIP.

First, taking the classical presentations $\pi_1(S^1)  = \langle c \mid - \rangle,$
and
$$
\pi_1(\Sigma_2)
=\langle a_1, b_1, a_2, b_2 \mid a_1b_1a^{-1}_1b^{-1}_1a_2b_2a^{-1}_2b^{-1}_2=1 \rangle.
$$
For any integer $m\geq 1$, let $\tilde f: \mathbb{H}^2\times \mathbb{R}\to \mathbb{H}^2\times \mathbb{R}$ be a self-map defined by
%\begin{equation}\label{eq-lift1}
$$\tilde f(z, x) = (z,\ \tilde r(z)+(m+1)x),$$
%\end{equation}
where $\tilde r: \mathbb{H}^2\to \mathbb{R}$ is a lifting of the retraction $r: \Sigma_2\to S^1$ determined by
the homomorphism $r_\pi: \pi_1(\Sigma_2)\to \pi_1(S^1)$, which is given by
\begin{equation}\label{eq-lift1}
  r_\pi(a_1) = c, \ r_\pi(b_1)=r_\pi(a_2)=r_\pi(b_2)=1.
\end{equation}

It is obvious that $\tilde f$ induces a fibre-preserving map $f: E\to E = \Sigma_2\times S^1$, and there is a commutative diagram
$$
\xymatrix{
E \ar[r]^f\ar[d]_p & E \ar[d]^p \\
\Sigma_2 \ar[r]^{\bar f} & \Sigma_2.
}
$$
Moreover, the induced self-map $\bar f$ on the base space $\Sigma_2$ is the identity. On each fibre $p^{-1}(z_0)=S^1$, the restriction $f|p^{-1}(z_0)$ of $f$ is a self-map of degree $m+1$, and hence has Lefschetz number $-m$. By \cite[IV 3.2 Theorem]{J83}, we have that
\begin{equation}\label{eq-Lef1}
  L(f) = (-m)\cdot L(\bar f) = (-m)\cdot \chi(\Sigma_2) =2m.
\end{equation}

Clearly, the lifting $\tilde f$ of $f$, induces an endomorphism $\tilde f_\pi$ on $\pi_1(E) = \pi_1(\Sigma_2)\times \pi_1(S^1)$ given by
$$
 \tilde f_\pi(a_1) = (a_1, c),\
 \tilde f_\pi(b_1) = b_1,\
 \tilde f_\pi(a_2) = a_2,\
 \tilde f_\pi(b_2) = b_2,\
 \tilde f_\pi(c)   = c^{m+1}.
$$
which is an injective endomorphism, but is not surjective.

Note that all liftings of $f$ are of the form $(u, c^s)\tilde f$, where $(u,c^s)\in \pi_1(\Sigma_2)\times \pi_1(S^1)$ is regarded as the deck transformation on the universal covering space $\widetilde E=\mathbb{H}^2\times \mathbb{R}$ of $E$. It is obvious that $\fix((u, c^s)\tilde f) = \emptyset$ if $u\ne 1$. Moreover, for any $s, s'\in \mathbb{Z}$,
$$(a_1^{s'-s}, 1)^{-1} (1, c^{s}) \tilde f_\pi(a_1^{s'-s},1)=(a_1^{s-s'}, 1) (1, c^{s})(a_1^{s'-s}, ~c^{s'-s})=(1, c^{s'}).$$
Thus, $f$ has unique non-empty fixed point class $\F$ determined by $\tilde f$, which has index
$$\ind(f, \F)=2m$$
by Equation (\ref{eq-Lef1}). Since the integer $m$ can be arbitrarily large, we have proven that $E$ does not have BIP.
\end{proof}

\begin{rem}
In the above proof, by a direct computation, we have
$$
\begin{array}{rcl}
\fix \tilde f
& = & \{(z, x)\in \mathbb{H}^2\times \mathbb{R}\mid (z, x) = \tilde f(z,x)  \}\\
& = & \{(z, x)\in \mathbb{H}^2\times \mathbb{R}\mid (z, x) = (z, \tilde r(z)+(m+1)x) \}\\
& = & \{(z, x)\in \mathbb{H}^2\times \mathbb{R}\mid  x=-\frac{\tilde r(z)}{m} \}.
\end{array}
$$
This is a connected subset homeomorphic to $\mathbb{H}^2$, and therefore the fixed point set of $f$ is an embedding surface in $E$. Note that the restriction of $p:E\to \Sigma_2$ on $\fix f$ is an $m$-sheet covering. We obtain  that $\F=\fix f$ is actually an orientable closed surface of genus $m+1\geq 2$.

In fact, if $m\leq -1$, self-maps of $E$ can be constructed similarly, see \cite[Example 5.1]{Z12} for the case $m=-1$.
\end{rem}

\section{Proof of Theorem \ref{main thm2}}\label{Proof of Theorem main thm2}

\begin{proof}[\textbf{Proof of Theorem \ref{main thm2}}]
Let $E=\Sigma_2\times T^2$, and let $p: E\to \Sigma_2$ be the projection to the first factor. Note that
$\pi_1(T^2)=\langle a, b \mid ab=ba \rangle,$
$$\pi_1(\Sigma_2)=\langle a_1, b_1, a_2, b_2 \mid a_1b_1a^{-1}_1b^{-1}_1a_2b_2a^{-1}_2b^{-1}_2=1 \rangle,$$
and
$$\pi_1(E)=\pi_1(\Sigma_2)\times \pi_1(T^2).$$

Now, let's take a few steps to prove the conclusion.

(1) For any integer $m\ge 1$, since $T^2$, $\Sigma_2$ and $E$ are aspherical, there is a fiber-preserving homeomorphism $f: E\to E$, $(x,y)\mapsto (x,~f_x(y))$ (for the detailed definition, see Step (2) below) with induced automorphism  $f_\pi: \pi_1(E)\to \pi_1(E)$ defined by
$$\pi_1(\Sigma_2)\times \pi_1(T^2)\ni (u, v)\stackrel{f_\pi}{\mapsto}(u, r_\pi(u)\xi(v)),$$
where $r_\pi: \pi_1(\Sigma_2)\to \pi_1(T^2)$ is given by
\begin{equation}\label{fuza chanrao}
r_\pi(a_1) = a,\ r_\pi(b_1) = b,\ r_\pi(a_2) = 1,\ r_\pi(b_2) = 1,\
\end{equation}
and $\xi: \pi_1(T^2)\to \pi_1(T^2)$ is an automorphism given by
$$\xi(a) = a^{m+1}b, \ \  \xi(b) = a^{m}b.
$$
Actually, $f_\pi$ is an automorphism with inverse given by
$$\pi_1(\Sigma_2)\times \pi_1(T^2)\ni (u, v){\mapsto}(u,\ \xi^{-1}(r_\pi(u^{-1}) v)).$$

(2) Construction of $f$. The fiber-preserving homeomorphism $f: E\to E$
$$
\xymatrix{
E \ar[r]^f\ar[d]_p & E \ar[d]^p \\
\Sigma_2 \ar[r]^{\bar f} & \Sigma_2
}
$$
is given by the lifting $\tilde f: \mathbb{H}^2\times \mathbb{R}^2\to \mathbb{H}^2\times \mathbb{R}^2$, which is defined by
$$(z, ~{{x}\choose{y}}) \stackrel{\tilde f}{\mapsto} (z, ~\tilde r(z)+
\left(
\begin{array}{ll}
  m+1 & m \\
  1 & 1  \\
\end{array}
\right)
{{x}\choose{y}}),$$
where $\tilde r: \mathbb{H}^2\to \mathbb{R}^2$ is the based lifting of the pinch map from $\Sigma_2$ to $T^2$ determined by $r_\pi$.\\

(3) Below we show that $\F=\fix f$ is the unique nonempty fixed point class of $f$.

Note that all liftings of $f$ are of the form $(u, v)\tilde f$, where $(u,v)\in \pi_1(\Sigma_2)\times \pi_1(T^2)$ is regarded as the deck transformation on the universal covering space $\widetilde E=\mathbb{H}^2\times \mathbb{R}^2$ of $E$. It obvious that $\fix((u, v)\tilde f) = \emptyset$ if $u\ne 1$. In order to prove $f$ has unique nonempty fixed point class, which is determined by $\tilde f$, it is sufficient to show that for any $v', v''\in \pi_1(T^2)$, there is an element $(u,v)\in \pi_1(\Sigma_2)\times \pi_1(T^2)$ such that
$$(1, v'') = (u, v)^{-1} (1, v') f_\pi(u,v),$$
i.e.
$$(1, v'') =  (u^{-1}, v^{-1}) (1, v') (u, r_\pi(u)\xi(v))\in \pi_1(\Sigma_2)\times \pi_1(T^2).$$
This is equivalent to say
$$
v'' = v^{-1} v' r_\pi(u) \xi(v).
$$
Assume that $v' =a^{s'}b^{t'}$, $v'' =a^{s''}b^{t''}$. Then we can take
$$u=a_1^{s''-s'}\in\pi_1(\Sigma_2), \quad v=a^{t''-t'}b^{t'-t''}\in\pi_1(T^2),$$
and hence
$$
\begin{array}{rcl}
v^{-1} v' r_\pi(u) \xi(v)
& = &a^{t'-t''}b^{t''-t'}\cdot a^{s'}b^{t'} \cdot a^{s''-s'}
   \cdot (a^{m+1}b)^{t''-t'}(a^m b)^{t'-t''}\\
& = & a^{s''}b^{t''}\\
& = & v''.
\end{array}
$$
Thus, we are done.

(4) Note that for a fixed $x\in \Sigma_2$, the restriction $f_x: T^2\to T^2$ of $f$ on the fiber $p^{-1}(x)\cong T^2$ is a homeomorphism which induces the automorphism
$$(f_x)_\pi=\xi: \pi_1(T^2)\to \pi_1(T^2).$$
Hence,
$$L(f_x) = \det
(\left(
\begin{array}{ll}
  1 & 0 \\
  0 & 1  \\
\end{array}
\right)
-
\left(
\begin{array}{ll}
  m+1 & m \\
  1 & 1  \\
\end{array}
\right)
)
= - m,
$$
$$N(f_x)  =|L(f_x) |=m.$$
By Step (3), all of these $m$ fixed point classes lie in the same fixed point class $\F=\fix f$, which is the unique essential fixed point class of $f$, having index
\begin{equation}\label{eq.}
\ind(f,\F)=L(f) =L(\bar f)L(f_x) = (-2)\times (-m) = 2m,
\end{equation}
where the second ``$=$" follows from \cite[IV 3.6 Theorem]{J83}, and $\bar f: \Sigma_2\to \Sigma_2$ is the identity.

Since the integer $m\geq 1$ can be arbitrarily large, Equation (\ref{eq.}) implies that $E$ does not have BIPH.
\end{proof}

\begin{rem}
(1) In the above proof, if $\tilde r(z) ={{\tilde r_x(z)}\choose{\tilde r_y(z)}}\in \mathbb{R}^2$, then
$$\fix(\tilde f)=\left\{(z, ~{{-\tilde r_y(z)}\choose{\tilde r_y(z)-\frac{1}{m}\tilde r_x(z)}})~\bigg|~
z\in \mathbb{H}^2\right\}$$
is a connected set;

(2) if Equation (\ref{fuza chanrao}) is taken as Equation (\ref{eq-lift1}):
$$
r_\pi(a_1) = a,\ r_\pi(b_1) =r_\pi(a_2) =r_\pi(b_2) = 1,
$$ then the computation of $\ind(f, \F)$ will be simpler, but for the sake of diversity, we have retained the above proof.
\end{rem}

\section{Alternative proof of Equation (\ref{eq.})}
%\begin{proof}[\textbf{Alternative proof of Equation \ref{eq.}}]
In this section, we give an alternative proof of the key Equation (\ref{eq.}).

Let's use the same notations in Section \ref{Proof of Theorem main thm2}. Then by Step (1),
$$
\begin{array}{rcl}
\fix f_\pi
& = & \{ (u,v)\in \pi_1(\Sigma_2)\times \pi_1(T^2) \mid (u, v)=(u, r_\pi(u)\xi(v)) \} \\
& = & \{ (u,v)\in \pi_1(\Sigma_2)\times \pi_1(T^2) \mid  v = r_\pi(u)\xi(v) \}. \\
\end{array}
$$
Take $v=a^sb^t$ and $r_\pi(u)= a^kb^l$. The equality $v = r_\pi(u)\xi(v)$ yields
$$a^sb^t = a^k b^l (a^{m+1}b)^s(a^m b)^t \in \pi_1(T^2).$$
Since $\pi_1(T^2)$ is abelian group with generators $a,b$, we obtain that
$$s = k+ (m+1)s+ mt, \ \  l+s+t =t.$$
It follows that
$$ s= - l, \ \ t= l-\frac{k}{m}.$$
Note that $k$ is actually the sum of power $\nu(u, a_1)$ of $a_1$ in $u$. Thus,
$$
\fix f_\pi
=\left\{ (u,v)\in \pi_1(\Sigma_2)\times \pi_1(T^2) ~\Big|~
   \begin{array}{l}
   v=a^{-\nu(u, b_1)} b^{\nu(u, b_1)-\nu(u, a_1)/m}, \\
   \nu(u, a_1)\equiv 0\mod m
   \end{array}
 \right\}.
$$
Hence, consider homomorphism  $p_\pi: \pi_1(E)\to \pi_1(\Sigma_2)$, we have
$$
p_\pi(\fix f_\pi)
=\{ u \in \pi_1(\Sigma_2) \mid \nu(u, a_1)\equiv 0\mod m  \}.
$$

Moreover, note that $\bar f_\pi: \pi_1(\Sigma_2)\to \pi_1(\Sigma_2)$ is the identity, and $\nu(u, a_1)$ gives an epimorphism
$\pi_1(\Sigma_2)\to \Z$. We have that
$$[\fix \bar f_\pi: p_\pi(\fix f_\pi)]=[\Z :m\Z]=m.$$
Therefore, by Lemma \ref{lem}, the fixed point class $\F=\fix f$ of $f$ with index
$$|\ind(f,\F)|=[\fix\bar f_{\pi} : p_{\pi}(\fix f_{\pi})]\cdot|\ind(\bar f, p(\F)|=m\cdot|\chi(\Sigma_2)|=2m,$$
where $ p(\F)=\Sigma_2$ and $\bar f: \Sigma_2\to \Sigma_2$ is the identity. Thus, we are done.\\
%\end{proof}

\vspace{6pt}

\noindent\textbf{Acknowledgements.} The first author thanks Shengkui Ye and Bin Yu for helpful communications.
%%=============================================================================================================================================================================================

%References--------------------------------------------------------------------------------------------------------------------------

%\end{CJK}
\end{document}